%%%%%%%%%%%%%%%%%%%%%%%%%%%%%%%%%%%%%%%%%%%%%%%%%%%%%%%%%%%%%%%%%%%%%%%%%%%%
%% Trim Size: 9.75in x 6.5in
%% Text Area: 8in (include Runningheads) x 5in
%% ws-idaqp.tex   :   4-9-07
%% Tex file to use with ws-idaqp.cls written in Latex2E.
%% The content, structure, format and layout of this style file is the
%% property of World Scientific Publishing Co. Pte. Ltd.
%% Copyright 1995, 2002 by World Scientific Publishing Co.
%% All rights are reserved.
%%%%%%%%%%%%%%%%%%%%%%%%%%%%%%%%%%%%%%%%%%%%%%%%%%%%%%%%%%%%%%%%%%%%%%%%%%%%
%

\documentclass[a4paper]{amsart}
\usepackage{amssymb}
\usepackage{amsmath}
\newcommand{\f}{\varphi}
\newcommand\G{\mathrm{I}\!\mathrm{\Gamma}}
\renewcommand\L{\mathcal L}
\newcommand\K{\mathcal K}
\newcommand\U{\mathcal U}

\newcommand\B{\mathcal B}

\newcommand{\s}{\mathfrak{s}}

\newcommand{\skp}[2]{\left<#1,#2\right>}

\newcommand\ind{\mathop{\mathrm{ind}}\nolimits}

\theoremstyle{plain}
\newtheorem{theorem}{Theorem}[section]

\newtheorem{proposition}[theorem]{Proposition}

\theoremstyle{definition}
\newtheorem{definition}{Definition}[section]

\theoremstyle{remark}
\newtheorem{remark}{Remark}[section]

%\DeclareMathOperator{\supp}{supp} % an example of defining a custom
                                % mathematical function

\begin{document}

%\markboth{D.\ J.\ Ke\v cki\'c, B.\ Vujo\v sevi\'c}{The index of a subspatial system}

%\numberwithin{equation}{section} % to get equations numbered
                                % automatically according to section
                                % number as (1.1), (1.2), etc

\title[The index of a subspatial product system]{The index of a subspatial product system over a Hilbert $C^*$-module -- an example}

\author{Dragoljub J.\ Ke\v cki\'c}

\address{Faculty of Mathematics\\ University of Belgrade\\ Student\/ski trg 16-18\\
11000 Belgrade, Serbia}

\email{keckic@matf.bg.ac.rs}

\author{Biljana Vujo\v sevi\'c}

\address{Faculty of Mathematics\\ University of Belgrade\\ Student\/ski trg 16-18\\
11000 Belgrade, Serbia}

\email{bvujosevic@matf.bg.ac.rs}

\keywords{Product system \and Hilbert module \and index \and noncommutative dynamics \and quantum
probability}%

\subjclass[2010]{46L53 \and 46L55 \and 60G20}%
%\subclass{MSC code1 \and MSC code2 \and more}

\begin{abstract}
Recently, Bhat, Liebscher and Skeide exhibited an unusual example of product system of Hilbert $C^*$ modules.
Namely, a subsystem of a Fock system, that is not Fock. Later, the authors defined the index of any product
system of Hilbert $C^*$-modules, generalizing earlier partial definitions. In this note we compute the index
of the mentioned interesting subsystem, and also we determine all its units.%
\end{abstract}

\maketitle

\section{Introduction}\label{secdefind}

Product systems over $\mathbb C$ have been studied during last several decades in connection with
$E_0$-semigroups acting on a type $I$ factor. Although the main problem of classification of all non
isomorphic product systems is still open, this theory is well developed. The reader is referred to Arveson's
book \cite{Arv03} and references therein. In the present century there are some significant results that
generalizes this theory to product systems over some $C^*$-algebra $\mathcal B$. The following definitions
can be found, for instance, in \cite{BS00}, \cite{SK02}, \cite{JFA04}.

\begin{definition} a) Product system over  $C^*$-algebra $\mathcal B$ is a family $(E_t)_{t\ge0}$ of Hilbert $\mathcal
B-\mathcal B$ modules, with $E_0\cong\mathcal B$, and a family of (unitary) isomorphisms
$$\f_{t,s}:E_t\otimes E_s\to E_{t+s},$$
where $\otimes$ stands for the so called inner tensor product obtained by identifications $u b\otimes v\sim
u\otimes bv$, $u\otimes vb\sim(u\otimes v)b$, $bu\otimes v\sim b(u\otimes v)$, ($u\in E_t$, $v\in E_s$,
$b\in\mathcal B$) and then completing in the inner product $\skp{u\otimes v}{u_1\otimes v_1}=\skp v{\skp
u{u_1}v_1}$;

b) Unit on $E$ is a family $u_t\in E_t$, $t\ge0$, such that $u_0=1$ and $\f_{t,s}(u_t\otimes u_s)=u_{t+s}$,
which will be abbreviated to $u_t\otimes u_s=u_{t+s}$. A unit $u_t$ is unital if $\skp{u_t}{u_t}=1$. It is
central if for all $b\in\mathcal B$ and all $t\ge0$ there holds $bu_t=u_tb$;

d) A product system $E$ is called spatial if it admits a central unital unit.
\end{definition}

Product systems over $\mathbb C$ are special case of the previous definition, and we shall refer to them as
Arveson systems.

Note that this definition does not include any technical condition such as measurability, continuity etc.\ of
product system. In fact, it is customary to pose such conditions directly on units.

\begin{definition} Two units $u_t$ and $v_t$ give rise to the family of mappings $\K^{u,v}_t:\mathcal B\to\mathcal B$,
given by $\K^{u,v}_t(b)= \skp{u_t}{bv_t}.$ All $\K^{u,v}_t$ are bounded $\mathbb C$-linear operators on
$\mathcal B$, and this family forms a semigroup. The set of units $S$ is continuous if the corresponding
semigroup $(K_t^{\xi,\eta})_{\xi,\eta\in S}$ (with respect to Schur multiplying) is uniformly continuous. A
single unit $u_t$ is uniformly continuous, or briefly just continuous, if the set $\{u\}$ is continuous, that
is, the corresponding family $\K^{u,u}_t$ is continuous in the norm of the space $B(\mathcal B)$ (algebra of
all bounded $\mathbb{C}$-linear operators on $\B$).
\end{definition}

As it can be seen in \cite{JFA04}, for a (uniformly) continuous set of units $\mathcal U$, there can be
formed a uniformly continuous completely positive definite semigroup ($CPD$-semigroup in further)
$\mathcal{K}=(\mathcal{K}_t)_{t\in{\mathbb{R}}_{+}}$.

Denote by $\mathcal{L}=\frac{d}{dt}\mathcal{K} \mid _{t=0}$ the generator of CPD-semigroup $\mathcal{K}$. It
is well known \cite{JFA04} that $\mathcal{L}$ is conditionally completely positive definite and also holds
\begin{equation}\label{L*}
\mathcal L^{y,x}(b)=\mathcal L^{x,y}(b^*)^*,\ x,y \in \U,\ b \in \B.
\end{equation}

It is known that $\K$ is uniquely determined by $\mathcal L$. More precisely, $\K$ can be recovered from
$\mathcal L$ by $\K=e^{t\mathcal L}$ using Schur product, i.e.
\begin{equation}\label{e^L}
\K^{x,y}_t(b)=\skp{x_t}{by_t}=(\exp t\mathcal L^{x,y})(b).
\end{equation}

\begin{remark}It should distinct the continuous set of units and the set of continuous units. In the second case
only $\K^{\xi,\xi}_t$ should be uniformly continuous for $\xi\in S$, whereas in the first case all
$\K_t^{\xi,\eta}$ should be uniformly continuous.
\end{remark}

Analogously to Arveson systems, product systems over $\B$ can be classified in terms of how many units it
has. Namely

\begin{definition} A product system $E$ is of type $I$ if it is generated by some continuous set of units $\mathcal
U$, i.e.\ if for all $t>0$ the $\B$-linear span of $\{u_t\:|\:u\in\mathcal U\}$ is dense in $E_t$ for some
continuous set of units $\mathcal U$. It is said to be of type $II$ if it has at least one continuous unit
and it is not of type $I$. Otherwise, it is of type $III$.
\end{definition}

In Arveson case, type $I$ systems are completely determined by its index, a positive integer obtained as the
dimension of a suitable linear space constructed on the base of the set of units, whereas the situation is
more complicated for non type $I$ systems.

Among all product systems, so called time ordered Fock modules (or timed ordered product systems) have an
important role. It can be constructed as follows.

Let $F$ be a Hilbert $\B-\B$ module. By $L^2(\mathbb{R}_+,F)$ we denote the completion of the exterior tensor
product $F \otimes L^2\mathbb(R_+)$. Then $L^2(\mathbb{R}_+,F)$ is a Hilbert $\B-\B$ module with obvious
structure. As usual we have $L^2(\mathbb{R}_+,F)^{\otimes n}=L^2(\mathbb{R}^n_+,F^{\otimes n})$.  The full
Fock module over  $L^2(\mathbb{R}_+,F)$ is defined as
$$\mathcal{F}(L^2(\mathbb{R}_+,F))=\bigoplus_{n \in \mathbb{N}_0} L^2(\mathbb{R}_+,F)^{\otimes n},$$
where $L^2(\mathbb{R}_+,F)^{\otimes 0}=\B$. By $\omega$ we denote the vacuum, i. e. 1 in
$L^2(\mathbb{R}_+,F)^{\otimes 0}$.

By $\Delta_n$ we denote the indicator function of the subset $\{ (t_n,\dots ,t_1): t_n>\dots >t_1>0 \}$ of
$\mathbb{R}^n_+$. Clearly, $\Delta_n$ acts as a projection on $L^2(\mathbb{R}_+,F)$. Set $\Delta =
\bigoplus\limits_{n \in \mathbb{N}_0} \Delta_n$. The time ordered Fock module is the two-sided submodule
$$\G(F)=\Delta \mathcal{F}(L^2(\mathbb{R}_+,F))$$
of $L^2(\mathbb{R}_+,F)$. Denote by $\G_t(F)$ the restriction of $\G(F)$ to $[0,t)$. Setting
$$[u_{s,t}(F^m_s \otimes G^n_t)](s_m,\dots,s_1,t_n,\dots,t_1)=F^m_s(s_m-t,\dots,s_1-t)\otimes G^n_t(t_n,\dots,t_1)$$
bilinear unitaries $u_{s,t}:\G_{s}(F) \otimes \G_{t}(F) \to \G_{s+t}(F)$ turn the family $\G^{\otimes}(F)=
(\G_t(F))_{t \in \mathbb{R}_+}$ into a product system, the time ordered product system over $F$.

Obtained system admits a central unital unit $\omega=(\omega_t)$ with $\omega_t=\omega$, so it is a spatial
product system. Also, it can be shown that time ordered Fock module is type $I$ system \cite{JFA04}.

In \cite{KPR06}, Skeide proved that, up to isomorphisms, time ordered Fock modules are only examples of
spatial type $I$ product systems. In the same paper, he use this fact to define the index of spatial product
systems over $C^*$-algebra to be module $F$ that appears in Fock product system isomorphic to a subsystem of
a given spatial product system. The index of product systems over a $C^*$-algebra was defined in full
generality in \cite{KV} (and also in \cite{Biljana}) as a certain bimodule constructed from the maximal
continuous set of units. It presents a generalization of the index of product systems over $\mathbb{C}$
defined by Arveson \cite{Arv89}, as well as the index of a spatial product system of Hilbert modules defined
by Skeide \cite{KPR06}.

However, unlike Arveson case, a product subsystem of the time ordered poduct system (following \cite{KV} we
call it {\em subspatial}) need not be isomorphic to a time ordered product system, as it was shown in
\cite{PAMS10} by constructing a counterexample.

The point of this note is to calculate the index of this product subsystem, as it is defined in \cite{KV},
\cite{Biljana}. In other words we shall determine all units of such subsystem. We shall do that in Section
\ref{3}, whereas Section \ref{2} contains auxiliary statements, necessary for the proof of the main result.

\section{Preliminaries}\label{2}

In this Section we give a list of already known results.

\begin{proposition}\label{linear-comb} Let $E$ be a product system over a $C^*$-algebra $\B$, and let $u$, $u_1$, $\dots$,
$u_n\in\mathcal U$, where $\mathcal U$ is some maximal set of continuous units. Also, let $\beta\in\B$,
$\varkappa_j\in\mathcal B$, $j=1,\dots,n$ such that $\sum\varkappa_j=1$.

Then $\mathcal U$ contains units denoted by $u^\beta$, $\varkappa_1u_1\boxplus\dots\boxplus\varkappa_nu_n$
and $u_1\varkappa_1\boxplus\dots\boxplus u_n\varkappa_n$, which kernels are
\begin{equation}\label{stepenovanje beta}
\mathcal
L^{u^\beta,u^\beta}=\mathcal{L}^{u,u}+\beta^{*}\mathrm{id}_{\mathcal{B}}+\mathrm{id}_{\mathcal{B}}\beta,\qquad
\mathcal L^{u^\beta,\xi}=\mathcal{L}^{u,{\xi}}+{\beta}^{*}\mathrm{id}_{\mathcal{B}}.
\end{equation}
$$\L^{\boxplus\varkappa_ju_j,\boxplus\varkappa_ju_j}=\sum_{i,j=1}^n\varkappa_i^*\mathcal
L^{u_i,u_j}\varkappa_j,\qquad \L^{\boxplus\varkappa_ju_j,\xi}=\sum_{i=1}^n\varkappa_i^*\mathcal
L^{u_i,\xi},$$
$$\L^{\boxplus u_j\varkappa_j,\boxplus u_j\varkappa_j}=\sum_{i,j=1}^n\mathcal L^{u_i,u_j}L_{\varkappa_i^*}
R_{\varkappa_j},\qquad \L^{\boxplus u_j\varkappa_j,\xi}=\sum_{i=1}^n\mathcal L^{u_i,\xi} L_{\varkappa_i^*},$$
where $L_a,R_a:\mathcal{B}\rightarrow\mathcal{B}$ are the left and right multiplication operators for
$a\in\mathcal B$.
\end{proposition}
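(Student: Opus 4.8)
The plan is to construct the asserted units directly inside the time-ordered-Fock picture — or, more precisely, to use the fact (recalled in the excerpt) that a continuous set of units is completely determined by the generator $\mathcal L$ of the associated CPD-semigroup via $\K = e^{t\mathcal L}$, and that $\mathcal U$ being \emph{maximal} means it is closed under any operation that produces a new continuous unit whose kernels against the existing units are again (conditionally) completely positive definite in the appropriate joint sense. So the strategy splits into two parts: (i) exhibit, for each of the three prescriptions $u^\beta$, $\boxplus\varkappa_j u_j$, $\boxplus u_j\varkappa_j$, an explicit family $(w_t)_{t\ge0}$ with $w_0 = 1$ and $w_s\otimes w_t = w_{s+t}$, i.e.\ a genuine unit; and (ii) compute $\mathcal L^{w,w}$ and $\mathcal L^{w,\xi}$ and check they equal the stated formulas, which simultaneously shows $\{w\}\cup\mathcal U$ is still a continuous set of units, hence $w\in\mathcal U$ by maximality.

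For part (i): in the time-ordered Fock module $\G(F)$ a unit has the exponential form $u_t = \exp^{\otimes}(\zeta \mathbf 1_{[0,t)}) b_t$ for a suitable $\zeta\in F$ and a semigroup $b_t$ in $\B$; the three operations correspond to (a) twisting the $\B$-valued part $b_t \mapsto e^{t\beta} b_t e^{t\beta^{*}}$-type modifications for $u^\beta$, (b) forming the ``direct sum'' unit in $\G(F^{\oplus n})$ whose $k$-th one-particle component is $\varkappa_k\zeta_k$ and projecting back, for $\boxplus\varkappa_j u_j$, and symmetrically multiplying on the right for $\boxplus u_j\varkappa_j$. In each case the unit property $w_s\otimes w_t=w_{s+t}$ is a short computation with the factorization $u_{s,t}$; alternatively, and more cleanly, one defines $w$ only through its kernels and invokes equation (\ref{e^L}) together with the uniqueness of the unit with prescribed kernels inside a product system generated by $\mathcal U$. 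I would present the kernel-level definition as primary, since it makes the continuity of the enlarged set of units automatic from the semigroup structure.

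For part (ii): one differentiates $\K^{w,\xi}_t$ at $t=0$. For $u^\beta$ the point is that $\skp{u^\beta_t}{b\xi_t} = \skp{u_t}{b\xi_t}$ composed with the scalar twist $e^{t\beta^{*}}$ acting on the left, so $\frac{d}{dt}\big|_{0}$ gives $\mathcal L^{u,\xi} + \beta^{*}\,\mathrm{id}_{\mathcal B}$, and the diagonal case picks up the extra $\mathrm{id}_{\mathcal B}\beta$ from the right twist and the cross term $\beta^{*}\beta$ which is absorbed because $\exp(t(\mathcal L^{u,u}+\beta^{*}\mathrm{id}+\mathrm{id}\beta))$ already accounts for it — here I would be careful that $\mathcal L$ is only the \emph{generator}, so the product rule is applied to the three commuting perturbations. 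For the $\boxplus$ units, linearity of $b\mapsto \skp{\cdot}{b\,\cdot}$ in each slot and the identity $\sum\varkappa_j = 1$ (which guarantees $w_0 = 1$) give $\K^{\boxplus\varkappa_j u_j,\xi}_t(b) = \sum_i \varkappa_i^{*}\K^{u_i,\xi}_t(b)$ after using centrality/bimodule identifications to move $\varkappa_i^{*}$ out; differentiating yields $\sum_i\varkappa_i^{*}\mathcal L^{u_i,\xi}$, and the diagonal double sum $\sum_{i,j}\varkappa_i^{*}\mathcal L^{u_i,u_j}\varkappa_j$ comes out the same way, with the right-hand version using $L_{\varkappa_i^{*}}R_{\varkappa_j}$ in place of the $\B$-bilinear sandwich because there the scalars sit on the Hilbert-module side rather than acting on the argument $b$.

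The main obstacle I anticipate is \textbf{not} the formal differentiation but justifying that the candidate families actually lie in the system and in the \emph{maximal} set $\mathcal U$ — i.e.\ that the joint kernel matrix obtained by adjoining $w$ to $\mathcal U$ is still conditionally completely positive definite (so that a continuous set of units with those kernels genuinely exists and, by maximality, already contains $w$). For $\boxplus$ this is the statement that $(\varkappa_i^{*}\mathcal L^{u_i,u_j}\varkappa_j)$ inherits conditional complete positive definiteness from $(\mathcal L^{u_i,u_j})$ under the ``congruence'' by the rectangular matrix $(\varkappa_1,\dots,\varkappa_n)$, together with the compatibility of this with the already-present units; for $u^\beta$ it is the (easy) fact that adding $\beta^{*}\mathrm{id}+\mathrm{id}\beta$ to the diagonal and $\beta^{*}\mathrm{id}$ to the off-diagonal preserves the relevant positivity. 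I would handle this by reducing to the corresponding statement about the time-ordered Fock realization, where the explicit one-particle vectors $\varkappa_k\zeta_k$ (resp.\ $\zeta_k\varkappa_k$) make the positivity manifest, and then transporting back along the isomorphism furnished by the type-$I$/spatial structure recalled in Section~\ref{secdefind}.
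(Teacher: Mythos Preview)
The paper does not prove this proposition at all; it simply quotes it from \cite[Section~4.2]{PAMS08} and \cite[Proposition~2.3]{KV}. So there is no ``paper's approach'' to compare with --- the result is imported wholesale.

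That said, your sketch has two genuine gaps that would prevent it from standing on its own. First, your construction in part~(i) works inside a time-ordered Fock module and then proposes to transport back ``along the isomorphism furnished by the type-$I$/spatial structure''. But the proposition is stated for an \emph{arbitrary} product system $E$ with a maximal continuous set of units; $E$ is not assumed spatial, and the subsystem generated by $\mathcal U$ need not be isomorphic to any $\G^{\otimes}(F)$ --- indeed, Theorem~\ref{podsistem} of the present paper is precisely an example where the subsystem generated by a single continuous unit is \emph{not} Fock. So the Fock reduction is unavailable in general, and your ``alternative'' of defining $w$ purely through its kernels does not furnish an element of $E_t$: prescribing $\mathcal L$ gives you a CPD-semigroup, not a unit inside $E$.

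Second, in part~(ii) you write $\K^{\boxplus\varkappa_j u_j,\xi}_t(b)=\sum_i\varkappa_i^{*}\K^{u_i,\xi}_t(b)$. This is false for $t>0$: a sum of semigroups is not a semigroup, and the $\boxplus$-unit is \emph{not} the naive linear combination $\sum_j\varkappa_j(u_j)_t$ (that family fails $w_s\otimes w_t=w_{s+t}$). Only the \emph{generator} is the linear expression $\sum_i\varkappa_i^{*}\mathcal L^{u_i,\xi}$; the semigroup is its exponential. Your differentiation happens to land on the right formula because both sides agree at $t=0$, but the reasoning that produced it is invalid. The actual construction in the cited references builds $u^\beta$ and the $\boxplus$-units directly inside $E$ as norm limits of iterated tensor products --- e.g.\ $(\boxplus\varkappa_j u_j)_t=\lim_{n\to\infty}\bigl(\sum_j\varkappa_j(u_j)_{t/n}\bigr)^{\otimes n}$ --- and then reads off the generator from the first-order term; you would need that Trotter-type argument, carried out in $E$ itself, to close the gap.
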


\begin{proof} This is \cite[Section 4.2, third example]{PAMS08} or \cite[Proposition 2.3]{KV}
\end{proof}

The set of all continuous units on some product system can be decomposed into mutually disjoint collection of
maximal continuous sets. This follows from \cite[Proposition 3.1]{KV} that is quoted here as

\begin{proposition}\label{maxcontset}
Let $\U$ denote the set of all continuous units on some product system $E$. The relation $\rho$ on $\U$
defined by
$$x \rho y \Leftrightarrow \{x,y\}\mbox{ is a continuous set}$$
is an equivalence relation. Consequently, the set of all continuous units on some product system can be
decomposed into mutually disjoint collection of maximal continuous sets.
\end{proposition}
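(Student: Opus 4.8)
The plan is to verify the three axioms of an equivalence relation; reflexivity and symmetry are immediate and transitivity is where all the work sits. For reflexivity note that $\{x,x\}=\{x\}$, and membership of $x$ in $\U$ means by definition exactly that $\{x\}$ is a continuous set; for symmetry observe that $\{x,y\}$ and $\{y,x\}$ denote the same set. In the argument I would repeatedly use two soft reductions. First, for a \emph{finite} set $F$, the set $\{F\}$ is continuous if and only if each of the finitely many kernels $\K^{\xi,\eta}_t$, $\xi,\eta\in F$, is norm-continuous as a family in $B(\B)$: the Schur-product semigroup $T_t$ attached to $F$ acts entrywise on $M_{|F|}(\B)$ and $T_0=\id$, so $\|T_t-\id\|$ is bounded above and below by constants times $\max_{\xi,\eta}\|\K^{\xi,\eta}_t-\id_{\B}\|$. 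Second, each $\K^{\xi,\eta}$ is a one-parameter semigroup, since $\K^{\xi,\eta}_{t+s}=\K^{\xi,\eta}_s\circ\K^{\xi,\eta}_t$ (a direct computation from $\xi_{t+s}=\f_{t,s}(\xi_t\otimes\xi_s)$), with locally bounded norm $\|\K^{\xi,\eta}_t\|\le\|\xi_t\|\,\|\eta_t\|$; hence norm-continuity of $\K^{\xi,\eta}$ at $t=0$ already forces uniform continuity in $t$.

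For transitivity, assume $x\rho y$ and $y\rho z$. The one genuinely product-system-specific input is the identity
\begin{equation}\label{keydiff}
\skp{x_t-y_t}{x_t-y_t}=\K^{x,x}_t(1)-\K^{x,y}_t(1)-\K^{y,x}_t(1)+\K^{y,y}_t(1),
\end{equation}
whose right-hand side tends to $1-1-1+1=0$ in $\B$ as $t\to0^{+}$, because $\{x,y\}$ being continuous makes each of those four kernels converge to $\id_{\B}$ in norm. Thus $\|x_t-y_t\|_{E_t}\to0$, and in the same way $\|y_t-z_t\|_{E_t}\to0$, so $\|x_t-z_t\|_{E_t}\to0$ by the triangle inequality. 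Now for $b\in\B$ write
$$\K^{x,z}_t(b)-b=\skp{x_t-z_t}{bz_t}+\bigl(\K^{z,z}_t(b)-b\bigr);$$
Cauchy--Schwarz bounds the first summand by $\|x_t-z_t\|\,\|z_t\|\,\|b\|$, where $\|z_t\|^2=\|\K^{z,z}_t(1)\|\to1$ is bounded near $0$, while the second is at most $\|\K^{z,z}_t-\id_{\B}\|\,\|b\|$, which tends to $0$ since $\{y,z\}$ is continuous. Taking the supremum over $\|b\|\le1$ gives $\|\K^{x,z}_t-\id_{\B}\|\to0$, and applying the same with $b^{*}$ in place of $b$ together with the identity $\K^{z,x}_t(b)=\K^{x,z}_t(b^{*})^{*}$ (immediate from the definition of the kernels, cf.\ \eqref{L*}) yields $\|\K^{z,x}_t-\id_{\B}\|\to0$. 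Since also $\|\K^{x,x}_t-\id_{\B}\|\to0$ and $\|\K^{z,z}_t-\id_{\B}\|\to0$, all four kernels attached to $\{x,z\}$ are norm-continuous at $0$; by the reductions above $\{x,z\}$ is then a continuous set, i.e.\ $x\rho z$.

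For the concluding remark, once $\rho$ is an equivalence relation its classes partition $\U$, and each class is a maximal family of pairwise continuously related units — the desired decomposition. I expect no serious obstacle: the only points needing care are the two soft reductions in the first paragraph (the finite-matrix norm comparison and the semigroup bootstrapping), while the crucial step \eqref{keydiff} is a one-line expansion of an inner product.
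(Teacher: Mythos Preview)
The paper does not actually prove this statement: it is quoted verbatim from \cite[Proposition 3.1]{KV} with no argument supplied here. Your proof is correct and self-contained; the key step---extracting $\|x_t-y_t\|_{E_t}\to 0$ from continuity of the four kernels over $\{x,y\}$, then transporting this to $\K^{x,z}$ via Cauchy--Schwarz and the semigroup identity $\K^{\xi,\eta}_{t+s}=\K^{\xi,\eta}_s\circ\K^{\xi,\eta}_t$---is exactly the right idea, and your two preliminary reductions (entrywise control of a finite Schur semigroup; norm-continuity at $0$ forcing uniform continuity of a locally bounded one-parameter semigroup) are standard and correctly invoked.

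One small remark on the concluding sentence: the $\rho$-classes are, strictly speaking, maximal sets of \emph{pairwise} continuously related units, whereas the paper's definition of a continuous set asks for uniform continuity of the full Schur semigroup over a possibly infinite index set. Identifying the two requires a further short uniformity argument that you have not written out; this is not a gap in the equivalence-relation proof itself, and in \cite{KV} the phrase ``maximal continuous set'' is used precisely for these $\rho$-classes.
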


\begin{proposition}\label{osobine skp}Let $E$ be a product system over  $\mathcal B$ with at least one continuous unit. (In view of
\cite[Definition 4.4]{SK03} this means that $E$ is non type $III$ product system.) Further, let $\mathcal
U=\mathcal U_\omega$ be the set of all uniformly continuous units that are equivalent to an arbitrary
continuous unit $\omega$ in $E$. (That refers to the equivalence relation $\rho$ on $\mathcal U$ defined in
Proposition \ref{maxcontset}.)

a) Define the addition and multiplication by $b\in\mathcal B$ on $\mathcal U_\omega$ by
\begin{equation}\label{operacije}
x+y=x\boxplus y\boxplus-\omega,\quad b\cdot x=bx\boxplus(1-b)\omega,\quad x\cdot b=xb\boxplus\omega(1-b),
\end{equation}
and define an equivalence relation $\approx$ by:
\begin{equation}\label{relekv}
x\approx y \Longleftrightarrow  x=y^\beta ,\ \beta\in\mathcal B.
\end{equation}
Then $\mathcal U_\omega$ has an algebraic structure of $\B-\B$ bimodule. Also, $\approx$ is compatible with
all algebraic operations in $\U_\omega$.

b) The mapping $\langle\ ,\ \rangle_b:\mathcal{U} \times \mathcal{U} \longrightarrow \mathcal{B}$ given by
\begin{equation}\label{skpdef}\langle
x,y\rangle_b=(\mathcal{L}^{x,y}-\mathcal{L}^{x,\omega}-\mathcal{L}^{\omega,y}+\mathcal{L}^{\omega,\omega})(b),
\end{equation}
where $\omega$ is the same as in (\ref{operacije}), is $\mathcal B$-valued semi-inner product (in the sense
that it can be degenerate). Properties of this mappings are
\begin{enumerate}
\item For all $x,y,z \in \mathcal{U}$, and $\alpha,\beta \in \mathbb{C}$ $\langle x,\alpha y+\beta
z \rangle_b=\alpha \langle x,y \rangle_b+\beta \langle x,z \rangle_b$;

\item For all $x,y \in \mathcal{U}$, $a \in \mathcal{B}$ $\langle x,y\cdot a \rangle_b=\langle x,y
\rangle_b a$;

\item For all $x,y \in \mathcal{U}$\ $\langle x,y \rangle_b=\langle y,x\rangle_b^{*}$;

\item For all $x \in \mathcal{U}$\ $\langle x,x\rangle_b\geq 0$;

\item If $x\approx x'$ and $y\approx y'$ then $\skp xy_b=\skp{x'}{y'}_b$.

\item\label{nej} If $0\le b(\in\mathcal B)\le1$ then for all $x\in\mathcal U$ we have $\skp xx_b\le\skp xx_1$.

\end{enumerate}

\end{proposition}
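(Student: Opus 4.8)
The plan is to verify properties (1)--(6) in turn, leaning on the defining formula \eqref{skpdef}, the linearity properties of the kernels $\mathcal{L}^{x,y}$ in each unit argument (which come from Proposition \ref{linear-comb} together with the definitions \eqref{operacije}), the symmetry relation \eqref{L*}, and the conditional complete positive definiteness of $\mathcal{L}$.

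First, for property (1), I would observe that by \eqref{operacije} the operation $\alpha y+\beta z$ (for scalars with $\alpha+\beta=1$, then extended) is realized inside $\mathcal{U}_\omega$, and that the kernel $\mathcal{L}^{x,\,\cdot\,}$ is affine-linear in its second argument with the $\omega$-terms in \eqref{skpdef} precisely cancelling the inhomogeneous part; thus $\skp{x}{\,\cdot\,}_b$ becomes genuinely linear. Property (2) is the same computation applied to right multiplication $y\cdot a=ya\boxplus\omega(1-a)$: by Proposition \ref{linear-comb} one has $\mathcal{L}^{x,\,ya\boxplus\omega(1-a)}=\mathcal{L}^{x,y}R_a+\mathcal{L}^{x,\omega}R_{1-a}$, and subtracting the reference terms in \eqref{skpdef} leaves exactly $\bigl(\mathcal{L}^{x,y}-\mathcal{L}^{x,\omega}-\mathcal{L}^{\omega,y}+\mathcal{L}^{\omega,\omega}\bigr)(b)\,a=\skp{x}{y}_b\,a$. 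Property (3) is immediate from \eqref{L*}: applying $*$ to \eqref{skpdef} for $\skp{y}{x}_{b^*}$ and using $\mathcal{L}^{y,x}(c^*)^*=\mathcal{L}^{x,y}(c)$ term by term gives $\skp{y}{x}_b^*=\skp{x}{y}_b$; here one must be a little careful that the four terms pair up correctly, i.e. the cross terms $\mathcal{L}^{x,\omega}$ and $\mathcal{L}^{\omega,y}$ swap under the operation.

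For property (4), the key point is that $\mathcal{L}$ is conditionally completely positive definite, meaning that for any finite family $x_1,\dots,x_n$ of units and any $b_1,\dots,b_n\in\mathcal{B}$ with $\sum b_i=0$ one has $\sum_{i,j} b_i^*\,\mathcal{L}^{x_i,x_j}(c)\,b_j\ge 0$ for $c\ge 0$. Taking $n=2$, $x_1=x$, $x_2=\omega$, $b_1=1$, $b_2=-1$, and $c=b$ collapses the double sum to exactly the combination $\mathcal{L}^{x,x}(b)-\mathcal{L}^{x,\omega}(b)-\mathcal{L}^{\omega,x}(b)+\mathcal{L}^{\omega,\omega}(b)=\skp{x}{x}_b\ge 0$. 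This is the step I expect to be the main (though still modest) obstacle, since it requires recalling the precise form of conditional complete positive definiteness from \cite{JFA04} and matching it to the four-term expression; everything else is bookkeeping with the bimodule operations. Property (5) follows because the kernels only depend on a unit up to the equivalence $\approx$ after the $\omega$-terms are subtracted: if $x\approx x'$ then $\mathcal{L}^{x',y}=\mathcal{L}^{x,y}+\beta^*\mathrm{id}_{\mathcal B}$ by \eqref{stepenovanje beta}, and the extra $\beta^*\mathrm{id}_{\mathcal B}$ term appears in $\mathcal{L}^{x',y}$ and in $\mathcal{L}^{x',\omega}$ with opposite signs in \eqref{skpdef}, hence cancels; similarly for the $y$-side. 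Finally, property (6) follows from (4) applied with $b$ replaced by $1-b$ (valid since $0\le 1-b\le 1$, and $\skp{x}{x}$ is positive and linear in the entry $b$): $\skp{x}{x}_1-\skp{x}{x}_b=\skp{x}{x}_{1-b}\ge 0$.
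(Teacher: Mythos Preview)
The paper itself gives no argument here: its entire proof is the sentence ``This is \cite[Theorem 3.2, Proposition 3.3]{KV}.'' Your proposal instead supplies a direct verification, and the outline is sound and is almost certainly what appears in \cite{KV}: kernel formulae from Proposition~\ref{linear-comb} (and their consequences \eqref{sabiranje}--\eqref{mnozenje-levo}) for (1)--(2), the symmetry \eqref{L*} for (3), conditional complete positive definiteness of $\mathcal L$ with coefficients $(1,-1)$ for (4), formula \eqref{stepenovanje beta} for (5), and additivity in $b$ together with (4) for (6).

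Two small points of care. In (2), the identity you need is $\mathcal L^{\xi,\,y\cdot a}(b)=\mathcal L^{\xi,y}(b)\,a+\mathcal L^{\xi,\omega}(b)(1-a)$, i.e.\ the coefficient multiplies \emph{after} $\mathcal L$ is evaluated (this is \eqref{mnozenje} dualised via \eqref{L*}); your shorthand ``$\mathcal L^{x,y}R_a$'' is ambiguous as operator composition, but your conclusion $\skp{x}{y\cdot a}_b=\skp{x}{y}_b\,a$ is correct once this is read the right way. For (3) and (4), your computations actually yield $\skp{y}{x}_b^{\,*}=\skp{x}{y}_{b^*}$ and $\skp{x}{x}_b\ge0$ for $b\ge0$; the literal statements in the proposition thus require $b=b^*$ (respectively $b\ge0$), and this is exactly how the result is invoked later in the paper (always with $0\le b\le1$).
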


\begin{proof} This is \cite[Theorem 3.2, Proposition 3.3]{KV}.
\end{proof}

\begin{remark}Note that the kernels of $x+y$, $x\cdot a$, $a\cdot x$ are
\begin{equation}\label{sabiranje}\begin{gathered}
\mathcal
L^{x+y,x+y}=\mathcal{L}^{x,x}+\mathcal{L}^{x,y}-\mathcal{L}^{x,\omega}+\mathcal{L}^{y,x}+\mathcal{L}^{y,y}-\mathcal{L}^{y,\omega}-
\mathcal{L}^{\omega,x}-\mathcal{L}^{\omega,y}+\mathcal{L}^{\omega,\omega},\\
\mathcal L^{x+y,\xi}=\mathcal{L}^{x,\xi}+\mathcal{L}^{y,\xi}-\mathcal{L}^{\omega,\xi},
\end{gathered}\end{equation}
\begin{equation}\label{mnozenje}\begin{gathered}
\mathcal L^{x\cdot a,x\cdot a}=a^{*}\mathcal{L}^{x,x}a+(1-a)^{*}\mathcal{L}^{\omega,x} a+a^{*}
\mathcal{L}^{x,\omega}(1-a)+ (1-a)^{*}\mathcal{L}^{\omega,\omega}(1-a),\\
\mathcal L^{x\cdot a,\xi}=a^{*}\mathcal{L}^{x,\xi}+(1-a)^{*}\mathcal{L}^{\omega,\xi},\ \xi \in \mathcal{U},$$
\end{gathered}\end{equation}
\begin{equation}\label{mnozenje-levo}\begin{gathered}
\mathcal L^{a\cdot x,a\cdot x}=\mathcal{L}^{x,x}L_{a^*}R_a+\mathcal{L}^{\omega,x}L_{1-a^*}R_a+
\mathcal{L}^{x,\omega}L_{a^*}R_{1-a}+\mathcal{L}^{\omega,\omega}L_{1-a^*}R_{1-a},\\
\mathcal L^{a\cdot x,\xi}=\mathcal{L}^{x,\xi}L_{a^*}+\mathcal{L}^{\omega,\xi}L_{1-a^*},\ \xi \in
\mathcal{U},$$
\end{gathered}\end{equation}
where $L_b,R_b:\mathcal{B}\rightarrow\mathcal{B}$ are the left and right multiplication operators for
$b\in\mathcal B$.\end{remark}

\begin{definition} \cite[Definition 3.4]{KV} Index of a product system $E$ is the completion of pre-Hilbert left-right module
$\U/_{\sim}$, where $\sim$ is equivalence relation defined by $x \sim y$ if and only if $x-y \in N=\{x \in
\U|\ \skp{x}{x}_1=0\}$.
\end{definition}

\begin{proposition}
Let E be a subspatial product system.

(a) The equivalence relation $\approx$ from (\ref{relekv}) is characterized as follows:
$$x \approx y \Longleftrightarrow x\sim y.$$

(b) $\U/_{\sim}$ is complete in the norm induced by $\skp{\ }{\ }$, and therefore it is a Hilbert left-right
$\B-\B$ module.
\end{proposition}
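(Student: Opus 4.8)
The plan is to pass to an ambient time ordered Fock module. By subspatiality $E$ is (isomorphic to) a product subsystem of some $\G(F)$, $F$ a Hilbert $\B$--$\B$ module, and I shall use the standard description of the continuous units of $\G(F)$: every uniformly continuous unit of $\G(F)$ is an ``exponential'' unit $\xi^{(\zeta,\beta)}$, $(\zeta,\beta)\in F\times\B$; distinct pairs give distinct units; such a unit is recovered from its kernels; and
\[
\mathcal L^{\xi^{(\zeta,\beta)},\xi^{(\zeta',\beta')}}(b)=\skp\zeta{b\zeta'}_F+\beta^*b+b\beta',\qquad b\in\B.
\]
Since the quantities $\skp{x_t}{\cdot\,y_t}$ behind the kernels are computed identically in $E_t$ and in $\G_t(F)$, the uniformly continuous units of $E$ are exactly those $\xi^{(\zeta,\beta)}$ all of whose fibres lie in $E$; and since the exponential units of $\G(F)$ form a single continuous set, those of $E$ constitute a single maximal continuous set, so $\U=\U_\omega$ is unambiguous. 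Writing $\zeta_x\in F$ for the $F$-component of a continuous unit $x$ of $E$, and fixing $\omega\leftrightarrow(\zeta_\omega,\beta_\omega)$, insertion of the kernel formula into (\ref{skpdef}) followed by cancellation of the $\beta$-terms gives
\[
\skp xy_b=\skp{\zeta_x-\zeta_\omega}{\,b\,(\zeta_y-\zeta_\omega)}_F\qquad(x,y\in\U,\ b\in\B),
\]
so that the norm on $\U/_{\sim}$ is $\|[x]\|=\|\zeta_x-\zeta_\omega\|_F$.

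For (a), the implication $x\approx y\Rightarrow x\sim y$ needs no Fock structure: if $x=y^\beta$ then $x-y\approx y-y=\omega$, because $\approx$ respects the bimodule operations (Proposition \ref{osobine skp}(a)), and since $\skp{\cdot}{\cdot}_1$ is insensitive to $\approx$ (Proposition \ref{osobine skp}(b)) this gives $\skp{x-y}{x-y}_1=\skp\omega\omega_1=0$, i.e.\ $x-y\in N$. Conversely, if $x\sim y$ then $0=\skp{x-y}{x-y}_1=\skp{\zeta_x-\zeta_y}{\zeta_x-\zeta_y}_F$ by the inner-product formula above, and definiteness of the inner product of the genuine Hilbert module $F$ forces $\zeta_x=\zeta_y$. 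Now Proposition \ref{linear-comb} pins down $y^{\beta_x-\beta_y}$ through its kernels, and — units of $\G(F)$ being determined by their kernels — these identify $y^{\beta_x-\beta_y}$ with the exponential unit of $F$-component $\zeta_y=\zeta_x$ and $\B$-component $\beta_x$, that is with $x$; hence $x=y^{\beta_x-\beta_y}$, i.e.\ $x\approx y$.

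For (b), part (a) makes $[x]\mapsto\zeta_x-\zeta_\omega$ a well-defined bijection of $\U/_{\sim}$ onto the sub-bimodule $G:=\{\zeta_x-\zeta_\omega:x\in\U\}$ of $F$; it is $\B$--$\B$-linear (this is the content of the kernel identities (\ref{sabiranje})--(\ref{mnozenje-levo}) read through the displayed formula for $\skp{\cdot}{\cdot}_b$) and intertwines the $\B$-valued inner products, hence is an isometry onto $G$. So $\U/_{\sim}$ is complete precisely when $G$ is closed in $F$, and that is all that remains. Take a Cauchy sequence in $G$, arising from units $x_n\in\U$ with $(\zeta_{x_n}-\zeta_\omega)_n$ Cauchy in $F$; by completeness of $F$ it converges to some $\eta$, and put $\zeta:=\eta+\zeta_\omega$. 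Replacing each $x_n$ by $x_n^{\beta_\omega-\beta_{x_n}}=\xi^{(\zeta_{x_n},\beta_\omega)}$ — which by (a) has the same $\sim$-class and, by Proposition \ref{linear-comb}, is again a unit of $E$ — we may assume $\beta_{x_n}=\beta_\omega$ for all $n$. For each fixed $t$ the map $\zeta'\mapsto\xi^{(\zeta',\beta_\omega)}_t$ from $F$ into $\G_t(F)$ is norm-continuous (a power series in $\zeta'$, convergent uniformly on bounded sets), so $x_{n,t}\to\xi^{(\zeta,\beta_\omega)}_t$ in $\G_t(F)$; and since $E_t$, being a Hilbert module, is closed in $\G_t(F)$, we get $\xi^{(\zeta,\beta_\omega)}_t\in E_t$ for every $t$. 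Thus $\xi^{(\zeta,\beta_\omega)}$ is a unit of $E$, uniformly continuous (being exponential) and lying in $\U_\omega$, and its image under the above map is $\eta$; so $\eta\in G$, $G$ is closed, and $\U/_{\sim}$ is a Hilbert $\B$--$\B$ module.

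The genuinely nontrivial ingredients, and where I expect the real work to lie, are the two structural facts about $\G(F)$ used above: that the uniformly continuous units of the time ordered Fock module are precisely the exponential units $\xi^{(\zeta,\beta)}$, pairwise distinct in their parameters and each determined by its kernels; and that $\zeta\mapsto\xi^{(\zeta,\beta)}_t$ depends norm-continuously on $\zeta$. Both are available from \cite{KPR06} and \cite{PAMS10}; once they are granted, (a) reduces to definiteness of the $F$-inner product and (b) to closedness of the fibres $E_t$ in $\G_t(F)$, as carried out above.
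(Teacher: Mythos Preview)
Your argument is correct. The paper does not actually prove this proposition; it simply records the statement and cites \cite[Proposition~5.5, Theorem~5.6]{KV}, so there is no in-paper argument to compare against at the level of details.

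That said, your route---pushing everything into the ambient Fock module $\G^\otimes(F)$, identifying $\U/_\sim$ isometrically with the sub-bimodule $G=\{\zeta_x-\zeta_\omega:x\in\U\}\subseteq F$ via $[x]\mapsto\zeta_x-\zeta_\omega$, and then proving $G$ closed by combining norm-continuity of $\zeta\mapsto\xi_t^{(\zeta,\beta)}$ with closedness of $E_t\subseteq\G_t(F)$---is precisely the mechanism the paper itself deploys later, in Step~$3^\circ$ of the proof of the main theorem, where the convergence $\|(\eta_n)_t-\eta_t\|\to 0$ is derived from $\zeta_n\to\zeta$ through the exponential series $\K_t=e^{t\L}$. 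So your part~(b) is the abstract version of an argument the paper writes out concretely in a special case, and your acknowledgement that the two Fock-module facts (parametrisation of continuous units by $F\times\B$, and continuity in $\zeta$) are the substantive inputs is accurate. The forward direction of~(a), which you obtain from compatibility of $\approx$ with the bimodule operations and item~(5) of Proposition~\ref{osobine skp}, needs no Fock embedding, as you note; it is only the converse that uses definiteness of $\skp{\cdot}{\cdot}_F$.
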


\begin{proof}These are results from \cite[Proposition 5.5, Theorem 5.6]{KV}.
\end{proof}

\begin{proposition}\label{Fock-spatial}
Let $\G^{\otimes}(F)$ be the product system of time ordered Fock modules where $F$ is a two-sided Hilbert
module over $\B$.

a) All continuous units in $\G^{\otimes}(F)$ can be parameterized by the set $F\times\mathcal B$;

b) The unit that corresponds to pair $(\zeta,\beta)$ denote by $u(\zeta,\beta)$. The corresponding kernels
are given by
\begin{equation}\label{Fock kernel}
\L^{u(\zeta,\beta),u(\zeta',\beta')}(b)=\skp\zeta{b\zeta'}+\beta^*b+b\beta';
\end{equation}

c) We have $\mathcal U_{\G^{\otimes}(F)}/_\sim=\{[u(\zeta,0)]\;|\;\zeta\in F\}$. Also,
$\ind(\G^{\otimes}(F))$ is isomorphic to $F$ as Hilbert left-right module.
\end{proposition}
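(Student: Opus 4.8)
The plan is to verify the three parts of Proposition~\ref{Fock-spatial} in order, relying on the explicit description of the time ordered Fock module and on the results of Section~\ref{2}. For part~(a) and part~(b), I would recall (from \cite{JFA04} or \cite{KPR06}) that a continuous unit in $\G^{\otimes}(F)$ is determined by its ``generator'', an element of the generator algebra which in the Fock case splits as a pair $(\zeta,\beta)\in F\times\B$: roughly, $\zeta$ encodes the ``one-particle'' component of $u_t$ at first order in $t$ and $\beta$ the scalar part. Concretely, one checks that the family
$$u_t(\zeta,\beta)=\bigoplus_n \int_{t>t_n>\dots>t_1>0}(\beta\text{-exponential})\,\zeta^{\otimes n}\,dt_n\cdots dt_1$$
(suitably interpreted) is a unit, that it is continuous, and that every continuous unit arises this way; this is essentially the computation that identifies the CPD-semigroup generated by $u(\zeta,\beta)$. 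Differentiating $\K^{u(\zeta,\beta),u(\zeta',\beta')}_t$ at $t=0$, or equivalently reading off the first-order term of $\skp{u_t(\zeta,\beta)}{b\,u_t(\zeta',\beta')}$, yields \eqref{Fock kernel}: the $\skp\zeta{b\zeta'}$ term comes from the one-particle overlap, and $\beta^*b+b\beta'$ from the vacuum cross terms. I expect part~(b) to reduce to a short bookkeeping argument once the parameterization in~(a) is granted, and in fact one may simply cite the corresponding statement in \cite{JFA04}/\cite{KPR06} and restrict attention to deriving~(c) from~(a),(b).

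For part~(c), the point is to compute the semi-inner product \eqref{skpdef} and the relation $\sim$ on $\mathcal U_{\G^{\otimes}(F)}$. Here the natural choice of reference unit is $\omega=u(0,0)$, the vacuum unit, whose kernel is $\L^{\omega,\omega}=0$ by \eqref{Fock kernel}. Plugging \eqref{Fock kernel} into \eqref{skpdef} with $x=u(\zeta,\beta)$, $y=u(\zeta',\beta')$, $\omega=u(0,0)$ gives
$$\skp{x}{y}_b=\bigl(\L^{x,y}-\L^{x,\omega}-\L^{\omega,y}+\L^{\omega,\omega}\bigr)(b)=\skp\zeta{b\zeta'},$$
since all the $\beta$-dependent terms cancel and the $\L^{\omega,\omega}$ term vanishes. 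Thus $\skp xx_1=\skp\zeta\zeta$, so $u(\zeta,\beta)\in N$ iff $\skp\zeta\zeta=0$ iff $\zeta=0$ in $F$; in particular $u(\zeta,\beta)\sim u(\zeta,0)$ for every $\beta$, which gives the description $\mathcal U_{\G^{\otimes}(F)}/_\sim=\{[u(\zeta,0)]\mid\zeta\in F\}$. (Alternatively, this last fact follows from \eqref{stepenovanje beta}: $u(\zeta,0)^\beta=u(\zeta,\beta)$, so $u(\zeta,\beta)\approx u(\zeta,0)$, and $\approx$ coincides with $\sim$ on a subspatial system.)

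It then remains to identify the left-right Hilbert module structure on $\{[u(\zeta,0)]\mid\zeta\in F\}$ with that of $F$. Using the operations \eqref{operacije} and the kernel formulas \eqref{sabiranje}, \eqref{mnozenje}, \eqref{mnozenje-levo}, together with \eqref{Fock kernel}, one checks that $u(\zeta,0)+u(\zeta',0)\sim u(\zeta+\zeta',0)$, that $a\cdot u(\zeta,0)\sim u(a\zeta,0)$ and $u(\zeta,0)\cdot a\sim u(\zeta a,0)$, and that $\skp{u(\zeta,0)}{u(\zeta',0)}_b=\skp\zeta{b\zeta'}$ matches the $\B$-valued inner product on $F$; hence the map $[u(\zeta,0)]\mapsto\zeta$ is a bimodule isomorphism preserving the inner product. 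Since $F$ is already complete, this map is the required isomorphism $\ind(\G^{\otimes}(F))\cong F$. The one genuinely delicate point is part~(a) --- establishing that \emph{every} continuous unit of the Fock system is of the form $u(\zeta,\beta)$ --- which rests on the structure theory of continuous units on time ordered Fock modules; for the present note this can legitimately be quoted from \cite{JFA04} and \cite{KPR06}, so the only real work is the (routine) inner-product and module-structure computations sketched above.
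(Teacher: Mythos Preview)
Your proposal is correct and follows essentially the same route as the paper: parts (a) and (b) are quoted from the literature (the paper cites \cite{LS01} and \cite{JFA04} rather than \cite{KPR06}, but the content is the same), and part (c) is obtained by plugging \eqref{Fock kernel} into \eqref{skpdef} to get $\skp{u(\zeta,\beta)}{u(\zeta',\beta')}_1=\skp\zeta{\zeta'}$, from which the identification $\ind(\G^{\otimes}(F))\cong F$ is immediate. Your write-up is more explicit about the cancellation of the $\beta$-terms and the bimodule operations, but the underlying argument is identical.
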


\begin{proof} a) These are results from \cite[Theorems 3 and 6]{LS01};

b) It is formula \cite[formula (3.5.2)]{JFA04};

c) By (\ref{Fock kernel}), we get
\begin{equation}\label{skpjed}
\skp{u(\zeta,\beta)}{u(\zeta',\beta')}_1=\skp\zeta{\zeta'},
\end{equation}
and the result follows immediately. See \cite[Example 6.2]{KV}.
\end{proof}

\begin{theorem}\label{podsistem}Let $\B=C_0[0,+\infty)+\mathbb{C} 1$ denote the unital $C^*$-algebra of all
continuous functions on $\mathbb{R}_+$ that have limit at infinity. Define the Hilbert $\B$-module $F:=\B$.
$F$ becomes a Hilbert $\B-\B$ module if we define the left action by
$$b\cdot x :=\s_1(b)x,$$
where $\s_t$ is the left shift by $t$, which acts as $[\s_t(b)](s)=b(s+t)$. $F^{\otimes n}$ is $\B$ as
Hilbert right module and with left action
$$b\cdot x :=\s_n(b)x.$$
Denote by $\xi=\xi(1,0)$ the unit of the time ordered product system over $F$ corresponding to the pair
$(1,0)$.

The product subsystem $E$ of $\G^{\otimes}(F)$ generated by the continuous unit $\xi$ has no central unit
vectors. In particular, it is not isomorphic to a time ordered system.
\end{theorem}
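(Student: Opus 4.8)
The plan is to reduce the assertion to a property of the fibres $E_t\subseteq\G_t(F)$ and then to study these directly. Recall that $\G_t(F)=\bigoplus_{n\ge0}\Delta_nL^2([0,t)^n,F^{\otimes n})$; write $x^{(n)}$ for the degree-$n$ component of $x\in\G_t(F)$. Since $F^{\otimes n}=\B$ carries the left action $b\cdot y=\s_n(b)y$, one has $(bx)^{(n)}=\s_n(b)\,x^{(n)}$ and $(xb)^{(n)}=x^{(n)}b$ for all $b\in\B$.

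The first step is to determine the central vectors of $\G_t(F)$. If $bx=xb$ for every $b\in\B$, then $\s_n(b)x^{(n)}=x^{(n)}b$ for all $n$ and $b$. Taking for $b$ the function $s\mapsto e^{-s}$, the element $\s_n(b)-b$ is nonzero at every point of $[0,+\infty)$ whenever $n\ge1$, so the relation $(\s_n(b)-b)x^{(n)}=0$ together with the continuity of the functions in $\B$ forces $x^{(n)}=0$ for every $n\ge1$. Conversely every degree-zero vector is central, $\B$ being commutative and the left action on the degree-zero part being untwisted. Hence the central vectors of $\G_t(F)$ are exactly those of the form $g\omega$, $g\in\B$. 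Consequently, if $E$ had a central unit vector $\eta=(\eta_t)$, then for each $t$ we could write $\eta_t=g_t\omega\in E_t$ with $\skp{g_t}{g_t}=g_t^{*}g_t=1$, so $\eta_t$ would be a nonzero vector of $E_t\cap\B\omega$. Thus it suffices to show that no $E_t$ (with $t>0$) contains a nonzero pure degree-zero vector, i.e. that $E_t\cap\B\omega=\{0\}$.

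To do this, describe $E$ concretely: being the smallest product subsystem of $\G^{\otimes}(F)$ containing $\xi$, its fibre $E_t$ is the closed linear span of the words
$$w=b_0\,\xi_{s_1}\,b_1\,\xi_{s_2}\cdots b_{k-1}\,\xi_{s_k}\,b_k,\qquad k\ge1,\ \ s_i>0,\ \ \textstyle\sum_{i}s_i=t,\ \ b_i\in\B,$$
the products being formed with the $u_{s,t}$'s and the bimodule structure. A direct computation with the $u_{s,t}$'s shows that $w^{(n)}$ is locally constant on $\Delta_n$: writing $I_1,\dots,I_k$ for the consecutive subintervals of $[0,t)$ occupied by $\xi_{s_1},\dots,\xi_{s_k}$ (so that $I_1$ abuts $t$), and letting $n_i$ be the number of the $n$ arguments that lie in $I_i$, the value of $w^{(n)}$ on that region is $\s_{m_1}(b_0)\,\s_{m_2}(b_1)\cdots\s_{m_k}(b_{k-1})\,b_k$ with $m_j=n_j+\dots+n_k$. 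In particular $w^{(0)}=b_0b_1\cdots b_k$, while on the corner where all $n$ arguments lie in $I_1$ one has $w^{(n)}=\s_n(b_0)\,b_1\cdots b_k$ for every $n$. Restricting a finite linear combination $v=\sum_l c_l w_l$ of words to an interval $[t-\e,t)$ short enough to lie inside the first subinterval of each $w_l$, one sees that, on that corner and simultaneously in every degree $n$, $v$ looks like the constant $\sum_l c_l\,\s_n\bigl(b_0^{(l)}\bigr)b_1^{(l)}\cdots b_{k_l}^{(l)}$. If a sequence of such $v$'s converged to a vector $g\omega\ne0$, these constants would tend to $g$ in degree $0$ and to $0$ in every degree $n\ge1$; one then plays the degree-zero datum against the higher-degree data — using that $\s_n(c)\to c(\infty)\cdot1$ in norm as $n\to\infty$ for each $c\in\B$ — to force $g=0$, a contradiction. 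Hence $E_t\cap\B\omega=\{0\}$, $E$ has no central unit vector, and since every time ordered system $\G^{\otimes}(F')$ carries the central unit vector $\omega'$, $E$ is not isomorphic to any time ordered system.

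I expect the last inference to be the main obstacle. The difficulty is that the length of the first subinterval of a word is not bounded below, so along an approximating sequence the relevant maps ``restrict to $[t-\e,t)$ and read off the $n$-th component'' are not uniformly bounded, and the extraction of a contradiction from the interplay of the degree-zero and the higher-degree parts must be organised carefully — most naturally through an explicit, convergent expansion of $w^{(0)}$ in terms of $w^{(1)},w^{(2)},\dots$. An alternative route, which relegates this point to standard machinery, is to identify the subsystem generated by the single unit $\xi$ with the GNS product system of the $CP$-semigroup $\K^{\xi,\xi}_t=\exp(t\,\s_1)$ on $\B$ (with $\xi$ corresponding to its canonical unit; here $\L^{\xi,\xi}=\s_1$, cf.\ (\ref{e^L})), and then to deduce the absence of a central unital unit from the specific form of the generator $\s_1$, a $*$-endomorphism of $\B$ that is not implemented by any element of $\B$. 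Either way, the essential work is the structural analysis of $E_t$ carried out above.
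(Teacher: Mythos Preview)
The paper itself does not prove this; it cites Section~3 of \cite{PAMS10}. The argument there differs from your primary route and hinges on the character $\pi_\infty:\B\to\mathbb C$, $f\mapsto\lim_{s\to\infty}f(s)$. Because $\s_n(b)(\infty)=b(\infty)$ for every $n$, tensoring on the right with $\mathbb C$ over $\pi_\infty$ collapses $F$ to the trivial one-dimensional bimodule and induces a contractive morphism of product systems $\Pi:\G^{\otimes}(F)\to\G^{\otimes}(\mathbb C)$ into the ordinary Arveson--Fock system. Under $\Pi_t$ every word $b_0\xi_{s_1}b_1\cdots\xi_{s_k}b_k$ is sent to $\pi_\infty(b_0\cdots b_k)\,\Pi_t(\xi_t)$, so $\Pi_t(E_t)\subseteq\mathbb C\,\Pi_t(\xi_t)$. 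Your first step already shows that a central unital unit would satisfy $\eta_t=g_t\omega$ with $|g_t|\equiv1$; hence $\Pi_t(\eta_t)=g_t(\infty)\,\omega$ is a \emph{nonzero} multiple of the vacuum, which for $t>0$ does not lie on the line through $\Pi_t(\xi_t)$ --- contradiction. This is precisely what your ``alternative route'' is reaching for; the phrase ``$\s_1$ not implemented by any element of $\B$'' should be replaced by the concrete observation that $\pi_\infty$ untwists the left action.

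Your primary route aims at the stronger assertion $E_t\cap\B\omega=\{0\}$, and the obstacle you flag is genuine, not cosmetic. For a finite sum of words, the map ``evaluate the degree-$n$ component on the corner $[t-\e,t)^n\cap\Delta_n$'' has norm of order $(n!/\e^n)^{1/2}$, and since $\e$ must shrink to $0$ along the approximating sequence these maps blow up; convergence $v^{(n)}\to0$ in $L^2$ therefore does \emph{not} force the corner constants to tend to $0$, so the sentence ``these constants would tend to $0$ in every degree $n\ge1$'' is unjustified. I do not see how to rescue this line of argument. Note also that the $\pi_\infty$ reduction only yields $E_t\cap\B\omega\subseteq C_0[0,\infty)\,\omega$, which already suffices for the theorem (a unital central unit has $|g_t(\infty)|=1$); establishing the full $E_t\cap\B\omega=\{0\}$, if true, would require a separate idea and is not needed here.
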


\begin{proof}Results of \cite[Section 3]{PAMS10}.
\end{proof}

\section{Result}\label{3}

In this section we calculate the index of the product system $E$ mentioned in Theorem \ref{podsistem}.

\begin{theorem} Let $E$ be the product system described in Theorem \ref{podsistem}. Then:

Among all continuous units $u=u(\zeta,\beta)$ in $\G^{\otimes}(F)$, exactly those for which $\zeta\in
1+C_0[0,+\infty)$ belongs to $E$.

The index of $E$ is isomorphic to $C_0[0,\infty)$ and it differs from the index of the containing time
ordered Fock space.
\end{theorem}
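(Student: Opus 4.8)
The plan is to describe $E$ as a product subsystem of $\G^{\otimes}(F)$ and to identify, among the continuous units $u(\zeta,\beta)$ of the ambient Fock system (parameterized by $F\times\B$ via Proposition \ref{Fock-spatial}), exactly which ones lie in $E$. The subsystem $E$ is by definition the smallest product subsystem containing $\xi=u(1,0)$; equivalently $E_t$ is the closed $\B$-$\B$-submodule of $\G_t(F)$ generated by all products $\xi_{t_n}\otimes\cdots\otimes\xi_{t_1}$ with $\sum t_i=t$, together with the actions of $\B$. The first step is to compute the maximal continuous set $\U_E$ of units of $E$. Since every unit of $E$ is a unit of $\G^{\otimes}(F)$, and since $\U_E$ must be a continuous set containing $\xi$, Proposition \ref{maxcontset} tells us $\U_E$ is contained in the $\rho$-equivalence class of $\xi$, which by Proposition \ref{Fock-spatial}(b) is all of $\{u(\zeta,\beta):\zeta\in F,\beta\in\B\}$ (the kernels \eqref{Fock kernel} are all uniformly continuous, so the whole family is one continuous set). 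So the real content is: which $u(\zeta,\beta)$ actually have $u(\zeta,\beta)_t\in E_t$ for all $t$?

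For the direction that the claimed units belong to $E$, I would use the closure properties of $\U_E$ from Propositions \ref{linear-comb} and \ref{osobine skp}: starting from $\xi=u(1,0)$ one may form $\xi^\beta$, giving by \eqref{stepenovange beta} and \eqref{Fock kernel} all units $u(1,\beta)$ for $\beta\in\B$; and one may form left and right $\B$-multiples and $\boxplus$-combinations. The key computation is to see what $\zeta$'s arise. Multiplying $\xi=u(1,0)$ on the left by $b\in\B$ produces, via \eqref{mnozenje-levo} and the specified left action $b\cdot x=\s_1(b)x$ on $F$, a unit whose $\zeta$-component is governed by $\s_1(b)$; taking $\boxplus$-combinations $\boxplus\,b_j\cdot\xi$ with $\sum b_j=1$ and passing to norm limits (legitimate since $\U_E/_\sim$ is complete, and the seminorm is $\|\skp\zeta\zeta\|^{1/2}=\|\zeta\|$ by \eqref{skpjed}) one should get exactly the units $u(\zeta,\beta)$ with $\zeta-1$ in the closed submodule of $F$ generated by $\{\s_1(b)-\s_1(b')\}$ — and I claim this submodule is $C_0[0,\infty)$. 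The point is that $\s_1$ maps $\B=C_0[0,\infty)+\mathbb C1$ onto functions that, modulo scalars, range over a dense subset of $C_0[0,\infty)$: indeed $\s_1(f)$ for $f\in C_0$ gives all shifted functions, and their span is dense in $C_0[0,\infty)$. So the set of attainable $\zeta$ is exactly $1+C_0[0,\infty)$.

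For the reverse inclusion — that no $u(\zeta,\beta)$ with $\zeta\notin 1+C_0[0,\infty)$ lies in $E$ — the strategy is to exhibit an invariant: a property of vectors in $\G_t(F)$ that is preserved under tensor products, $\B$-actions and closure, that is satisfied by $\xi_t$ and fails for $u(\zeta,\beta)_t$ when $\zeta-1\notin C_0$. The natural candidate comes from the proof of Theorem \ref{podsistem} in \cite{PAMS10}: the "one-particle" component of $u(\zeta,\beta)_t\in\G_t(F)$ is, up to the exponential normalization, the function $s\mapsto\zeta$ on $[0,t)$, i.e.\ the constant section with value $\zeta\in F=\B$; because the left action on $F^{\otimes n}$ is through $\s_n$, the generators $\xi_{t_n}\otimes\cdots\otimes\xi_{t_1}$ and all their $\B$-$\B$-combinations produce one-particle components lying in a fixed closed submodule $M\subseteq L^2([0,t),\B)$, and one checks $M\cap\{\text{constants}\}=1\cdot\mathbb C+C_0[0,\infty)$. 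Evaluating the limit $s\to\infty$ of $\zeta(s)$ (well defined since $\zeta\in\B$) is the obstruction: $\lim_{s\to\infty}\zeta(s)$ can be any scalar for $u(\zeta,\beta)\in\G^{\otimes}(F)$, but for vectors in $E$ it is forced to equal the scalar part, so $\zeta\in 1+C_0[0,\infty)$.

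Finally, once the unit set is pinned down, the index computation is immediate from the definitions and Proposition \ref{osobine skp}: by \eqref{skpdef} and \eqref{skpjed}, with reference unit $\omega=\xi=u(1,0)$, we get $\skp{u(\zeta,\beta)}{u(\zeta',\beta')}_b=\skp{\zeta-1}{b(\zeta'-1)}$, so $N=\{u(\zeta,\beta):\zeta=1\}$ and $\U_E/_\sim\cong\{\zeta-1:\zeta\in 1+C_0[0,\infty)\}=C_0[0,\infty)$ as a Hilbert $\B$-$\B$-module (with the given left action through $\s_1$ restricted to $C_0$). Comparing with Proposition \ref{Fock-spatial}(c), which gives $\ind(\G^{\otimes}(F))\cong F=\B$, and noting $C_0[0,\infty)\subsetneq\B$ is not even unital, the two indices are non-isomorphic. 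I expect the main obstacle to be making the reverse inclusion rigorous — i.e.\ proving that the "limit at infinity of the one-particle section" really is preserved under all the operations defining $E$ (tensor products with the $\s_n$-twisted left actions, and especially norm closure), rather than just heuristically; this is where one must be careful to set up the invariant as a genuine continuous functional on the relevant submodule and verify it is multiplicative/additive in the right way across the tensor structure.
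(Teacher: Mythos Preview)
Your reverse-inclusion sketch and the index computation at the end are essentially the same as the paper's (the paper also defers the ``$\zeta\notin 1+C_0\Rightarrow u(\zeta,\beta)\notin E$'' direction to the argument of \cite{PAMS10}, and computes the index by the same inner-product formula, just with the affine picture $1+C_0$ instead of your $C_0$).

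The forward inclusion, however, has a real gap. You propose to generate the units $u(\zeta,0)$, $\zeta\in 1+C_0$, from $\xi=u(1,0)$ using only the abstract operations $(\cdot)^\beta$, $a\cdot(\,\cdot\,)$, $(\,\cdot\,)\cdot a$, $\boxplus$ of Propositions~\ref{linear-comb} and~\ref{osobine skp}. But the bimodule operations \eqref{operacije}, \eqref{mnozenje}, \eqref{mnozenje-levo} are defined \emph{relative to a reference unit $\omega$}. Inside $E$ the only reference available at the outset is $\xi$ itself, and with $\omega=\xi$ the operation $a\cdot\xi=a\xi\boxplus(1-a)\xi$ is $\xi$ again; more generally, a direct kernel computation shows that any $\boxplus$-combination (left or right) of units of the form $u(1,\beta)$ is again of the form $u(1,\beta')$. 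So the abstract operations never move the $\zeta$-component off $1$. Your computation that ``$a\cdot\xi$ has $\zeta$-component $\s_1(a)$'' is correct only if one takes the \emph{vacuum} $\omega=u(0,0)$ as reference --- and $u(0,0)\notin E$ is precisely the content of Theorem~\ref{podsistem}. Hence that operation does not a priori land in $\U_E$, and the argument is circular.

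The paper gets around this not by abstract algebra but by a concrete Fock-module computation: for $\zeta>0$ on $[0,n)$ and $\zeta\equiv 1$ on $[n,\infty)$, it writes down explicit $b_0,b_1\in\B$ (namely $b_0(s)=\prod_{k=0}^{n-1}\zeta(s+k)$ and $b_1=1/b_0$) and checks, component by component in $F^{\otimes m}$ using the shifted left action $b\cdot x=\s_m(b)x$, that the Hilbert-module element $b_1\xi_t b_0\in E_t$ coincides with $\eta(\zeta,0)_t$ for every $t$. Only after this concrete step producing a second genuinely different unit in $E$ do the $\boxplus$-operations become nontrivial (the paper's Step~2), and then an approximation argument (Step~3) handles general $\zeta\in 1+C_0$. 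Your proposal is missing exactly this bootstrap step; once you supply it, the rest of your outline goes through.
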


\begin{proof}
The product subsystem $E$ of $\G^{\otimes}(F)$ that is generated by $\xi$ is $E=(E_t)_{t \geq 0}$ with
$$E_t=\overline{span} \{ b_n\xi_{t_n} \otimes \dots \otimes b_1 \xi_{t_1} b_0|\  n \in \mathbb{N}, t_i>0, t_1+ \dots +t_n=t, b_i \in \B \},\ t>0.$$
Since $\xi=\xi(1,0)=(\xi_t)_{t \geq 0}$, $$\xi^0_t=1 \in \B,$$
$$\xi^n_t(r_n,\dots,r_1)=\underbrace{1 \otimes \dots \otimes 1}_{n} \in F^{\otimes n},\  t>r_n>\dots
>r_1>0.$$

{\sc $1^\circ$ step.} For $n \in \mathbb{N}$, let $\zeta \in F$ be the function that satisfies
$$\zeta(s) > 0,\  0 \leq s<n \mbox{\ \ and\ \ } \zeta(s)=1,\  s \geq n.$$
Let $\eta=\eta(\zeta,0)$ be the corresponding unit in $\G^{\otimes}(F)$. Let $b_0, b_1 \in \B$:
$$b_0(s)=\zeta(s) \zeta(s+1) \cdots \zeta(s+n-1),\ b_1(s)=\frac{1}{b_0(s)}.$$
For $n \in \mathbb{N}$ and $t>r_n>\cdots >r_1>0$,
$$(\eta^n_t(r_n,\cdots,r_1))(s)=(\underbrace{\zeta \otimes \cdots \otimes \zeta}_{n})(s)=\zeta(s+n-1)\cdots \zeta(s+1)\zeta(s),$$
and
$$((b_1 \xi_t b_0)^n(r_n,\dots,r_1))(s)=(b_1 \xi^n_t(r_n,\dots,r_1) b_0)(s)=(s_n(b_1)(\underbrace{1 \otimes \dots \otimes 1}_{n})b_0)(s)=$$
$$=b_1(s+n)b_0(s)=\frac{\zeta(s) \zeta(s+1) \dots \zeta(s+n-1)}{\zeta(s+n)\zeta(s+n+1)\dots \zeta(s+n+n-1)}=$$
$$=\zeta(s) \zeta(s+1) \dots \zeta(s+n-1).$$
Therefore, $\eta_t=b_1 \xi_t b_0$ and the unit $\eta$  belongs to the product subsystem $E=(E_t)_{t\geq 0}$.

{\sc $2^\circ$ step.} Let $n\in \mathbb{N}$ and consider $\zeta \in F$ such that $\zeta(s)=1,\  s \geq n$.
Let $\eta=\eta(\zeta,0)$ be the corresponding unit in $\G^{\otimes}(F)$. There exists $\alpha \in (0,1)$ such
that $\alpha \zeta(s)+(1-\alpha)1(s)>0$. Define  $\zeta'=\alpha \zeta+(1-\alpha)1 \in F$ and consider unit
$\eta'=\eta'(\zeta',0)$. From the previous part, $\eta'$ is a unit in $E$.

The mapping $t \mapsto \frac{1}{\alpha}\eta'_t+(1-\frac{1}{\alpha})\xi_t$ satisfies all assumptions of
Proposition \ref{linear-comb} and the resulting unit, denoted by $\theta$, belongs to $E$.

Let $u=u(\mu,\beta)$ be an arbitrary unit in $\G^{\otimes}(F)$. By Proposition \ref{linear-comb} and by
(\ref{Fock kernel}),
$$\L^{\theta,u}(b)=\frac{1}{\alpha} \L^{\eta',u}(b)+\left(1-\frac{1}{\alpha}\right) \L^{\xi,u}(b)=
\frac{1}{\alpha} \left( \skp{\zeta'}{b\mu}+b\beta\right)+\left(1-\frac{1}{\alpha}\right)
(\skp{1}{b\mu}+b\beta)=$$
$$=\skp{\frac{1}{\alpha} \zeta'+\left( 1-\frac{1}{\alpha} \right)1}{b\mu}+b\beta=\skp{\zeta}{b\mu}+b\beta=\L^{\eta,u}(b),\ b\in \B.$$
It follows $\L^{\theta,u}=\L^{\eta,u}$ and, by \cite[Lemma 2.6]{KV}%\ref{jednakostjed}
, $\eta=\theta$ belongs to $E$.

{\sc $3^\circ$ step.} Now consider $\zeta \in F$ such that $\lim\limits_{s\rightarrow +\infty} \zeta(s)=1$
and let $\eta=\eta(\zeta,0)$ be the corresponding unit in $\G^{\otimes}(F)$. There is a sequence $\zeta_n\in
F$ such that $\lim\limits_{n \rightarrow +\infty} \| \zeta- \zeta_n \|=0$, and $\zeta_n(s)=1$ for all $s\ge
n$. (For instance we can define $\zeta_n(s)=\zeta(s)/\zeta(n)$ for $s<n$ and some $n$ large enough that
$\zeta(n)\neq0$, and $\zeta_n(s)=1$ for $s\ge n$.)

Every unit $ \eta_n=\eta_n(\zeta_n,0)$ is a unit in product subsystem $E$ by the previous part. By
(\ref{skpjed}) it follows
\begin{multline*}\| \eta-\eta_n \|_{\U_{\G^{\otimes}(F)}}^2=\| \skp{\eta-\eta_n}{\eta-\eta_n}_1 \|=\\
\|\skp{\eta}{\eta}_1-\skp{\eta}{\eta_n}_1-\skp{\eta_n}{\eta}_1+\skp{\eta_n}{\eta_n}_1\|=\\
=\|\skp{\zeta}{\zeta}-\skp{\zeta}{\zeta_n}-\skp{\zeta_n}{\zeta}+\skp{\zeta_n}{\zeta_n}\|=\|\skp{\zeta-\zeta_n}{\zeta-\zeta_n}\|=
\|\zeta-\zeta_n\|^2.
\end{multline*}
This implies $\lim\limits_{n \rightarrow +\infty} \eta_n=\eta$.

It remains to show that $\eta \in \mathcal{U}_{E}$. Although it immediately follows from \cite[Theorem
5.6]{KV}, for the convenience of the reader we shall outline an independent proof.

For $0<\varepsilon\leq1$ there is $n_0 \in \mathbb{N}$ such that
\begin{equation}\label{Cauchy1}
\| \langle \eta_n-\eta, \eta_n-\eta \rangle \| < \varepsilon^2\quad\mbox{for }n \geq n_0.
\end{equation}
Let $b>0$ and $\tilde{b}=b/\|b\|$. For $n \geq n_0$, we have by Proposition \ref{osobine skp} (b.\ref{nej})
\begin{multline}\label{velika}\| \langle \eta_n,\eta_n \rangle_{\tilde{b}}- \langle \eta,\eta \rangle_{\tilde{b}} \| \leq \\\| \langle
\eta_n-\eta, \eta_n-\eta \rangle_{\tilde{b}}\|+\| \langle \eta_n-\eta,\eta \rangle_{\tilde{b}}\|+\| \langle
\eta,\eta_n-\eta
\rangle_{\tilde{b}}\| \leq\\
\leq \| \langle \eta_n-\eta, \eta_n-\eta \rangle_{1}\|+2\sqrt{\| \langle \eta_n-\eta, \eta_n-\eta
\rangle_{1}\|}\sqrt{\|
\langle \eta,\eta \rangle_1 \|} <\\
< \varepsilon^2+2\varepsilon \sqrt{\| \langle \eta,\eta \rangle_1 \|} < \varepsilon\mathrm{\ const}.
\end{multline}
Since $\L^{\omega,\eta_n}(\tilde{b})=\L^{\eta_n,\omega}(\tilde{b})=0$ by (\ref{Fock kernel}), we obtain
$$\mathcal{L}^{\eta_n,\eta_n}(\tilde{b})-\mathcal{L}^{\eta,\eta}(\tilde{b})=
\langle \eta_n,\eta_n \rangle_{\tilde{b}}-\langle \eta,\eta \rangle_{\tilde{b}}.$$
It follows, by (\ref{velika}),
\begin{equation}\label{conv}
\|(\mathcal{L}^{\eta_n,\eta_n}-\mathcal{L}^{\eta,\eta})(b)\|\leq \varepsilon\ \mathrm{const} \|b\|,
\end{equation}
for any $b\geq0$. Since every element of $\mathcal B$ is a linear combination of at most four positive
elements, we conclude that $\L^{\eta_n,\eta_n}$ converges to $\L^{\eta,\eta}$ in $B(\mathcal B)$, multiplying
the constant in (\ref{conv}) by $4$ if necessary.

For every unit $u$ in $ \G^{\otimes}(F)$, we have
$$\langle u,\eta_n-\eta \rangle_{\tilde{b}} \langle \eta_n-\eta,u \rangle_{\tilde{b}} \leq
\| \langle \eta_n-\eta, \eta_n-\eta \rangle_{\tilde{b}} \| \langle u,u \rangle_{\tilde{b}}.$$
By Proposition \ref{osobine skp} (b.\ref{nej}) and (\ref{Cauchy1}), $\| \langle \eta_n-\eta,u
\rangle_{\tilde{b}} \| < \varepsilon \sqrt{\| \langle u,u \rangle_1 \|}$ implying
\begin{equation}
\|(\mathcal{L}^{\eta_n,u}- \mathcal{L}^{\eta,u})(b) \| \leq \varepsilon \sqrt{\| \langle u,u \rangle_1 \|}
\|b\|,
\end{equation}
for all $\mathcal B\ni b\geq0$. As above we conclude that $\L^{\eta_n,u}$ converges to $\L^{\eta,u}$ in
$B(\mathcal B)$. The convergence is uniform with respect to $u$,\ $\|\skp{u}{u}\|\leq 1$.

Therefore, we proved
\begin{equation}
\lim\limits_{n \rightarrow {+\infty}} \| \mathcal{L}^{\eta_n,\eta_n}-\L^{\eta,\eta} \|=0,
\end{equation}
\begin{equation}
\lim\limits_{n \rightarrow {+\infty}} \| \mathcal{L}^{\eta_n,u}-\L^{\eta,u} \|=0.
\end{equation}

Since $\|\mathcal{L}^{\eta_n,\eta_n}\|,\ \|\mathcal{L}^{\eta_n,u}\|\leq\ \mathrm{const,\ for\ }n \in
\mathbb{N}$, the series
$$\sum\limits_{m=0}^{+\infty} \frac{t^m (\mathcal{L}^{\eta_n,\eta_n})^m}{m!}\quad\mbox{and}\quad
\sum\limits_{m=0}^{+\infty} \frac{t^m (\mathcal{L}^{\eta_n,u})^m}{m!}$$
uniformly converge with respect to $n\in\mathbb{N}$, which by Lebesgue dominant convergence theorem implies
$$\lim\limits_{n \rightarrow {+\infty}} \langle (\eta_n)_t,\bullet (\eta_n)_t \rangle=
\lim\limits_{n \rightarrow {+\infty}}e^{t \mathcal{L}^{\eta_n,\eta_n}}= \lim\limits_{n \rightarrow {+\infty}}
\sum\limits_{m=0}^{+\infty} \frac{t^m (\mathcal{L}^{\eta_n,\eta_n})^m}{m!}=e^{t \L^{\eta,\eta}},$$
$$\lim\limits_{n \rightarrow {+\infty}} \langle (\eta_n)_t,\bullet u_t \rangle=
\lim\limits_{n \rightarrow {+\infty}}e^{t \mathcal{L}^{\eta_n,u}}= \lim\limits_{n \rightarrow {+\infty}}
\sum\limits_{m=0}^{+\infty} \frac{t^m (\mathcal{L}^{\eta_n,u})^m}{m!}=e^{t \L^{\eta,u}}.$$
Therefore, by (\ref{e^L}),
$$\lim\limits_{n \rightarrow {+\infty}} \langle (\eta_n)_t,(\eta_n)_t \rangle=\langle \eta_t, \eta_t \rangle,$$
$$\lim\limits_{n \rightarrow {+\infty}} \langle (\eta_n)_t, \eta_t \rangle=\langle \eta_t, \eta_t \rangle.$$
Now, there holds
$$\lim\limits_{n\rightarrow +\infty} \|(\eta_n)_t-\eta_t\|^2=\lim\limits_{n\rightarrow +\infty} \|\langle(\eta_n)_t-\eta_t,(\eta_n)_t-\eta_t\rangle\|=0,$$
implying $\eta_t \in E_t$ and $\eta\in \U_{E}$.\\

Hence we proved that for every $\zeta \in 1+C_0[0,+\infty)$, a unit $\eta(\zeta,0) \in \U_{E}$.

{\sc $4^\circ$ step.} Also, it can be easily seen that if a unit $\eta(\zeta,0) \in \U_{\G^{\otimes}(F)}$
belongs to $\U_{E}$, there must be $\lim\limits_{s \rightarrow +\infty} \zeta(s)=1$, i. e. $\zeta \in
1+C_0[0,+\infty)$. This follows from the proof of \cite[Theorem 3.1]{PAMS10}.

Finally, $\U_{E}/_{\sim}=\{[u(\zeta,0)]\;|\;\zeta\in 1+C_0[0,+\infty)\}$ and $\mbox{ind}(E) \cong
1+C_0[0,+\infty)$. (The last isomorphism, as it is well known, relies on a translation that endow the affine
space $1+C_0[0,+\infty)$ with a structure of a bimodule. This structure is not canonical, it depends on the
choice of "zero", but all such linear structures are mutually isomorphic - again via a suitable translation.
cf.\ \cite[Proposition 5.4]{KV})
\end{proof}

%\section*{Acknowledgment}
%This work was supported by the Ministry of education and science, Republic of Serbia, Grant \#174034.

\bibliographystyle{plain}

\bibliography{IndexSubspatial}

%\nocite{*}

\end{document}